\newcommand{\Z}{\mathbb Z}	
\newcommand{\Q}{\mathbb Q}	
\newcommand{\F}{\mbox{$\mathbb F$}}	
\newcommand{\T}{\xi}
\newcommand{\K}{L}
\newtheorem{theorem}{Theorem}[section]
\newtheorem{lemma}[theorem]{Lemma}
\newtheorem{corollary}[theorem]{Corollary}
\newcommand{\rs}{u}
\begin{document}

\pagenumbering{arabic}
\author[]{Anuj Jakhar}

\author[]{{\small {ANUJ JAKHAR\footnote{The Institute of Mathematical Sciences, HBNI, CIT Campus, Taramani, Chennai - 600113, Tamil Nadu, India. Email : anujjakhar@iisermohali.ac.in}  ~~ SUDESH K. KHANDUJA\footnote{Corresponding author.} \footnote{Indian Institute of Science Education and Research Mohali, Sector 81, Knowledge City, SAS Nagar, Punjab - 140306, India  \textsc{\&} Department of Mathematics, Panjab University, Chandigarh - 160014, India. Email : skhanduja@iisermohali.ac.in} ~~AND~ NEERAJ SANGWAN\footnote{Indian Institute of Technology (IIT), Bombay,  Mumbai-400076, India, neerajsan@iisermohali.ac.in} } \\}}

\date{}
\renewcommand\Authands{}

\title{\textsc{On the discriminant of pure number fields}}

\maketitle
\begin{center}
{\large{\bf {\textsc{Abstract}}}}
\end{center}  Let $K=\mathbb{Q}(\sqrt[n]{a})$ be an extension of degree $n$ of the field $\Q$ of rational numbers, where the integer $a$ is such that for each prime $p$ dividing $n$ either $p\nmid a$ or the highest power of $p$ dividing $a$ is coprime to $p$; this condition is clearly satisfied when $a, n$ are coprime or $a$ is squarefree. The paper contains an explicit formula for the discriminant of $K$ involving only the prime powers dividing $a,n$. 

\bigskip

\noindent \textbf{Keywords :} Rings of algebraic integers; discriminant; monogenic number fields.

\bigskip

\noindent \textbf{2010 Mathematics Subject Classification }: 11R04; 11R29.
\newpage
\section{\textsc{Introduction}}
Discriminant is one of the most basic invariants associated to an algebraic number field. The problem of its computation specially for pure 
algebraic number fields has attracted the attention of many mathematicians (cf.   \cite{Ded}, \cite{Fun}, \cite{HN}, \cite{JKS27}, \cite{Lan}, \cite{Wes}). By a pure number field we mean an algebraic number field of the type $\Q(\sqrt[n]{a})$, where the polynomial $x^n-a$ with integer coefficients is irreducible over the field $\Q$ of rational numbers.   In 1897, Landsberg \cite{Lan} gave a formula for the  discriminant of pure prime degree number fields. In 1984, Funakura \cite{Fun} provided a formula for the discriminant of all pure quartic fields. In  2015, Hameed and Nakahara  \cite{HN} found a formula for the discriminant of all those pure octic fields $\Q(\sqrt[8]{a})$, where $a$ is a squarefree integer. In 2017, we gave a formula for the discriminant of pure number fields having squarefree degree (cf. \cite{JKS27}).  
  In the present  paper, our aim  is to give a  formula for the discriminant of $n$-th degree fields of the type 
$\Q(\sqrt[n]{a})$ in terms of prime powers dividing $a, n$, where for each prime $p$ dividing $n$ either $p$ does not divide $a$ or the highest power of 
$p$ dividing $a$ (to be denoted by $v_p(a)$) is coprime with $p$. With this hypothesis, a formula for the discriminant of $\Q(\sqrt[n]{a})$ is given by Gassert in $\cite{Gass}$ using a method proposed by Montes and developed in (\cite{JG1}-\cite{JG4}), but our proof given here is based on the classical Theorem of Ore about Newton polygons and is more or less self-contained. 

Precisely stated, we prove:
\begin{theorem}\label{1.1}
Let $K=\mathbb Q(\theta)$ be an algebraic number field  with discriminant $d_K$,  where $\theta$ is a root of an irreducible polynomial  $f(x) = x^{n} - a$ belonging to $\mathbb Z[x]$.  Let $\prod\limits_{i=1}^k p_i ^{s_i}, \prod\limits_{j=1}^l q_{j}^{t_j}$ be the prime factorizations of $n,|a|$ respectively. Let $m_j, n_i$ and $r_i$ stand respectively for the integers $\gcd(n, t_j), \frac{n}{p_i^{s_i}}$ and $ v_{p_i} (a^{p_i-1} -1)-1$. Assume that  for each $i$,  either $v_{p_i}(a)=0 $ or $v_{p_i}(a)$ is coprime to $p_i$.  Then $$d_K 
=(-1)^{\frac{(n-1)(n-2)}{2}} sgn(a^{n-1})(\prod\limits _{i=1}^k p_i ^{ v_i}) 
 \prod\limits_{j=1}^l q_j ^{n-m_j}, 
$$ where   $v_i $ equals  $ns_i -2n_i \sum\limits_{j=1}^{\min\{r_i,s_i\} }  p_i^{s_i 
-j}$ or $ns_i $ according as  $r_i>0$ or not. 
\end{theorem}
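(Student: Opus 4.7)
The plan is to compute $d_K$ via the classical identity $\mathrm{disc}(f) = [\mathcal{O}_K : \mathbb{Z}[\theta]]^2 \cdot d_K$ for $f(x) = x^n - a$. A routine calculation using $f'(\theta) = n\theta^{n-1}$ and $N_{K/\mathbb{Q}}(\theta) = (-1)^{n+1}a$ gives $\mathrm{disc}(f) = (-1)^{(n-1)(n-2)/2} n^n a^{n-1}$; since $\mathrm{sgn}(d_K) = \mathrm{sgn}(\mathrm{disc}(f))$, this delivers the sign factor $(-1)^{(n-1)(n-2)/2}\mathrm{sgn}(a^{n-1})$ in the theorem. It remains to compute $v_p([\mathcal{O}_K : \mathbb{Z}[\theta]])$ for each prime $p$ dividing $n|a|$ via Ore's theorem: for every monic lift $\phi \in \mathbb{Z}[x]$ of an irreducible factor of $\bar f \in \mathbb{F}_p[x]$ I would form the $\phi$-Newton polygon $N_\phi^-(f)$ as the lower convex hull of the lattice points $(i, v_p(c_i))$ for the $\phi$-adic expansion $f = \sum c_i(x)\phi(x)^i$; whenever $f$ is $\phi$-regular (each slope's residual polynomial separable), Ore gives $v_p([\mathcal{O}_K : \mathbb{Z}[\theta]]) = \sum_\phi \deg(\phi)\cdot\mathrm{ind}(N_\phi^-(f))$, where $\mathrm{ind}$ counts lattice points $(x, y)$, $x, y \geq 1$, lying on or below the polygon.

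For a prime $q_j \mid a$ with $q_j \nmid n$, only $\phi = x$ arises and the $x$-Newton polygon is the single segment $(0, t_j)$–$(n, 0)$; its residual polynomial $y^{m_j} - \overline{a/q_j^{t_j}}$ is separable over $\mathbb{F}_{q_j}$ because $q_j \nmid n/m_j$, and a direct lattice count yields $v_{q_j}(d_K) = (n-1)t_j - 2\mathrm{ind} = n - m_j$. For a prime $p_i \mid n$ with $p_i \mid a$, again only $\phi = x$ appears: the $x$-Newton polygon is $(0, v_{p_i}(a))$–$(n, 0)$, whose residual polynomial is $y^d - \bar u$ with $d = \gcd(n, v_{p_i}(a))$ coprime to $p_i$ by the hypothesis on $v_{p_i}(a)$, hence separable; the analogous count recovers $v_{p_i}(d_K) = ns_i + n - d$, matching the sum $v_i + (n - m_j)$ predicted by the theorem.

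The substantive case is $p_i \mid n$ with $p_i \nmid a$. Since Frobenius is trivial on $\mathbb{F}_{p_i}$, $\bar f = (x^{n_i} - \bar a)^{p_i^{s_i}} = \prod_\ell \bar\phi_\ell^{p_i^{s_i}}$ where $\{\bar\phi_\ell\}$ are the distinct irreducible factors of the separable polynomial $x^{n_i} - \bar a$ (so $\sum_\ell \deg\phi_\ell = n_i$). For each $\phi_\ell$, I would pass to the unramified extension $W$ of $\mathbb{Z}_{p_i}$ of degree $f_\ell = \deg\phi_\ell$, pick the Teichmüller lift $\tilde\alpha \in W$ of a root of $\bar\phi_\ell$, and expand
\[
f(\tilde\alpha + y) \;=\; (\tilde\alpha^n - a) \;+\; \sum_{i \geq 1} \binom{n}{i}\tilde\alpha^{n-i}\, y^i.
\]
The $i \geq 1$ coefficients have $v_{p_i}$ equal to $v_{p_i}\binom{n}{i}$ (since $\tilde\alpha$ is a unit), controlled by Kummer's theorem; and because $\tilde\alpha^n$ equals the Teichmüller lift $\omega(\bar a) \in \mathbb{Z}_{p_i}$ of $\bar a$, the constant term $\omega(\bar a) - a$ has $v_{p_i}$ equal to $r_i + 1$ by the Lifting-the-Exponent lemma applied to $a^{p_i-1} - 1$. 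These valuations are independent of $\ell$, so every $N_{\phi_\ell}^-(f)$ is the same convex polygon; each negative-slope segment has numerator $1$ in lowest terms (giving a linear, hence separable, residual polynomial), so Ore's regularity holds, and a careful lattice count yields $\mathrm{ind}(N_{\phi_\ell}^-(f)) = \sum_{j=1}^{\min(r_i, s_i)} p_i^{s_i - j}$. Summing, $v_{p_i}([\mathcal{O}_K : \mathbb{Z}[\theta]]) = n_i \sum_{j=1}^{\min(r_i, s_i)} p_i^{s_i - j}$, and subtracting twice this from $v_{p_i}(\mathrm{disc}(f)) = ns_i$ produces $v_i$ exactly as stated. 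The principal obstacle is controlling the shape of this Newton polygon: Kummer's formula can produce extraneous vertices at height $0$ away from the ``natural'' candidates $(p_i^{s_i-j}n_i, j)$, and one must verify that the height-$\geq 1$ envelope always contains precisely $\sum_{j=1}^{\min(r_i, s_i)} p_i^{s_i-j}$ interior lattice points regardless of these aberrations and in every regime of $r_i$ versus $s_i$.
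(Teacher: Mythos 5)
Your overall skeleton matches the paper's: compute $(-1)^{\binom{n}{2}}N_{K/\Q}(f'(\theta))=(-1)^{\frac{(n-1)(n-2)}{2}}n^na^{n-1}$, then determine $v_p(Ind(\theta))$ prime by prime via Ore's index theorem, and your treatment of the primes dividing $a$ is essentially the paper's Lemma \ref{2.1}. But for the substantive case $p_i\mid n$, $p_i\nmid a$ you take a genuinely different route. You work with the full Montes apparatus: $\bar f=(x^{n_i}-\bar a)^{p_i^{s_i}}$, higher-degree lifts $\phi_\ell$, Teichm\"uller representatives in the unramified extension $W$, and the identification of the $\phi_\ell$-polygon with the Newton polygon of $f(\tilde\alpha+y)$ over $W$ (this identification is true but is itself a nontrivial lemma of that theory, and is essentially Gassert's proof, which the paper cites as the alternative approach). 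The paper instead deliberately avoids all $\phi$ of degree $>1$: it first passes to the subfield $K_i=\Q(\theta^{n_i})$ of degree $p_i^{s_i}$ and proves $v_{p_i}(Ind(\theta))=n_i\,v_{p_i}(Ind(\theta_i))$ by a relative-discriminant argument ($d_{K/K_i}$ divides $N_{K/K_i}(n_i\theta^{n_i-1})A_{K_i}$, which is prime to $p_i$), and then only ever needs the ordinary $\phi=x$ Newton polygon of $(x+a)^{p_i^{s_i}}-a$ over $\Q_{p_i}$, with the binomial valuations supplied by its Lemma 2.D. Your route is more uniform but leans on heavier machinery; the paper's buys a self-contained argument from the weak versions of Ore's theorems it states. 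Your valuation $v_{p_i}(\omega(\bar a)-a)=r_i+1$ of the constant term is correct and is the analogue of the paper's observation that $v_p(a^{p^s-1}-1)=v_p(a^{p-1}-1)$.

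There is, however, one concrete error in your argument: the assertion that \emph{each} negative-slope segment of the principal polygon has a linear (hence automatically separable) residual polynomial. This fails for $p_i=2$ when $1\leq r_i\leq s_i$. There the candidate vertices $(0,r_i+1)$, $(2^{s_i-r_i},r_i)$, $(2^{s_i-r_i+1},r_i-1)$ are collinear (both candidate edges have slope $-1/2^{s_i-r_i}$), so they merge into a single edge of horizontal length $2^{s_i-r_i+1}$ and denominator $2^{s_i-r_i}$, whose residual polynomial has degree $2$, not $1$. Having slope with numerator $1$ in lowest terms does not force a linear residual polynomial; the degree is the length divided by the denominator. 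Ore's index formula holds with equality only under regularity, so one must verify that this quadratic residual polynomial is separable: since the middle lattice point $(2^{s_i-r_i},r_i)$ lies on the edge, the residual polynomial is $Y^2+Y+\bar 1\in\F_2[Y]$, which is indeed separable (the paper carries out exactly this check in the $p=2$ branch of Lemma \ref{2.3}). Your closing caveat about ``extraneous vertices'' only addresses the lattice-point count, which is in fact unaffected by the merging; the point that actually needs attention is regularity. Without it your argument yields only an inequality for $v_2(Ind(\theta))$.
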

The following corollary is an immediate application of the above theorem.
\begin{corollary}\label{2.22}
Let $p$ be a prime number and $a\neq \pm 1$ be a squarefree integer. Let $K=\Q(\theta)$ with $\theta$ a root of $x^{p^s}-a$. If $r$ stands for the integer $v_p(a^{p-1}-1)-1$, then $d_K$ is $(-1)^{\frac{(p^s-1)(p^s-2)}{2}}p^{\nu}a^{p^s -1}$, where $\nu$ equals $sp^s- 2\sum\limits_{j=1}^{\min\{r,s\}} p^{s-j}$ or $sp^s$ according as $r > 0$ or not.
\end{corollary}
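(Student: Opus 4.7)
\bigskip
\noindent \textbf{Proof proposal for Corollary \ref{2.22}.} The plan is simply to specialize Theorem \ref{1.1} to the data $n=p^s$ and $a$ squarefree, and to check that the general formula collapses to the stated one. There is essentially no new idea involved; the work lies in bookkeeping the indices.

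First I would verify that the hypothesis of Theorem \ref{1.1} is satisfied. The only prime dividing $n=p^s$ is $p$ itself. Since $a$ is squarefree, $v_p(a)\in\{0,1\}$, and the value $1$ is trivially coprime to $p$, so the condition ``$v_{p}(a)=0$ or $v_p(a)$ is coprime to $p$'' holds in either case. Hence Theorem \ref{1.1} applies.

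Next I would read off the parameters appearing in the theorem. Writing $n=\prod p_i^{s_i}$ gives $k=1$, $p_1=p$, $s_1=s$, and $n_1=n/p^{s}=1$; the integer $r_1$ is by definition $v_p(a^{p-1}-1)-1=r$. For the factorization $|a|=\prod_{j=1}^{l}q_j^{t_j}$, squarefreeness of $a$ forces each $t_j=1$, whence $m_j=\gcd(p^s,1)=1$ and therefore
\[
\prod_{j=1}^{l} q_j^{\,n-m_j}=\prod_{j=1}^{l}q_j^{\,p^{s}-1}=|a|^{p^{s}-1}.
\]
Combining with the sign factor yields $sgn(a^{n-1})\,|a|^{p^{s}-1}=a^{p^{s}-1}$, which accounts for the $a^{p^{s}-1}$ appearing in the corollary.

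Finally I would handle the exponent $v_1$. Substituting $n_1=1$, $s_1=s$, $p_1=p$ into the two cases of Theorem \ref{1.1} gives
\[
v_1 \;=\; s p^{s}-2\sum_{j=1}^{\min\{r,s\}} p^{\,s-j} \quad \text{if } r>0, \qquad v_1 \;=\; s p^{s} \quad \text{otherwise,}
\]
which matches the quantity $\nu$ in the statement. The exponent on $-1$ is $(n-1)(n-2)/2=(p^{s}-1)(p^{s}-2)/2$, so assembling everything recovers the claimed expression for $d_K$. The only potential snag is the verification that the hypothesis holds in the edge case $p\mid a$, and the observation that squarefreeness of $a$ makes this automatic is what makes the deduction immediate.
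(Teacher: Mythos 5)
Your proposal is correct and matches the paper's treatment: the paper gives no separate argument for Corollary \ref{2.22}, presenting it as an immediate specialization of Theorem \ref{1.1} with $k=1$, $n_1=1$, $t_j=1$, $m_j=1$, exactly as you carry out. Your explicit check that squarefreeness of $a$ guarantees the hypothesis $v_p(a)\in\{0,1\}$ is the right (and only) point needing verification.
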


It can be easily seen that in the special case when $p= 2$ and $s=3$, the formula obtained  in the above corollary for $K=\Q(\theta)$  can be restated in the following form as given in \cite{HN}: \\ $d_K = \left\{
              \begin{array}{llll}
         -2^{24} a^{7}   , & \hbox{if~ $a \equiv 2,3 (mod ~4)$,} \\
        -2^{16} a^{7}   , & \hbox{if~ $a \equiv 5,13 (mod ~16)$,} \\
     -2^{12} a^{7}   , & \hbox{if~ $a \equiv 9 (mod ~16)$,} \\
     -2^{10} a^{7}  , & \hbox{if~ $a \equiv 1 (mod ~16)$.} 
              \end{array}
            \right.$         \\   
 

The following corollary which is partially proved in \cite{Gass} will be quickly deduced from Theorem \ref{1.1}.
\begin{corollary}\label{1.5} Let $K=\Q(\theta)$ be an extension of $\Q$ with $\theta$ satisfying an irreducible polynomial $x^n -a$ over $\Z$. Then $\{1,\theta, \cdots, \theta^{n-1} \}$ is an integral basis of $K$ if and only if $a$ is squarefree and for each prime $p$ dividing $n$, $p^2\nmid (a^{p-1}-1)$.
	
\end{corollary}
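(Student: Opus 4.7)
The plan is to exploit the identity $\operatorname{disc}(x^n-a) = [\mathcal{O}_K : \Z[\theta]]^2\, d_K$, so that $\{1,\theta,\dots,\theta^{n-1}\}$ is an integral basis of $K$ if and only if $|d_K| = |\operatorname{disc}(x^n-a)| = n^n |a|^{n-1} = \prod_i p_i^{n s_i}\prod_j q_j^{(n-1)t_j}$. The proof then reduces to a prime-by-prime comparison of exponents, with one side supplied by Theorem~\ref{1.1}.

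For the \emph{if} direction, I would assume $a$ is squarefree and $p^2 \nmid a^{p-1}-1$ for every prime $p\mid n$. Squarefreeness forces $t_j = 1$ for every $j$, so the hypothesis of Theorem~\ref{1.1} is satisfied, and I would check equality of $p$-adic valuations prime by prime. For $q_j\nmid n$, one has $m_j = \gcd(n,1) = 1$, matching the contribution $q_j^{n-1}$ on both sides. For $p_i = q_j$ dividing both $n$ and $|a|$, the congruence $a^{p_i - 1}\equiv -1\pmod{p_i}$ forces $r_i = -1$, hence $v_i = n s_i$, and the combined exponent $n s_i + (n-1)$ matches. Finally, for $p_i\mid n$ with $p_i\nmid a$, the hypothesis $p_i^2 \nmid a^{p_i-1}-1$ translates into $r_i \leq 0$, and again $v_i = n s_i$. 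Thus $|d_K| = |\operatorname{disc}(x^n-a)|$ and the power basis is integral.

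For the \emph{only if} direction a subtlety arises: the hypothesis of Theorem~\ref{1.1} could fail when $a$ is not squarefree, so the theorem cannot be invoked immediately. The main obstacle is therefore to deduce squarefreeness of $a$ from the integrality of the power basis as a preliminary step. I would do this by an explicit construction: assuming $q^2 \mid a$ for some prime $q$, set $t := v_q(a) \geq 2$ and $k := \lceil n/t\rceil$, so that $1\leq k\leq n-1$ and $kt \geq n$; then $(\theta^k/q)^n = a^k/q^n$ is a rational integer, whence $\theta^k/q \in \mathcal{O}_K$, yet the coefficient of $\theta^k$ in its expansion in the basis $\{1,\theta,\dots,\theta^{n-1}\}$ of $K$ over $\Q$ is $1/q\notin\Z$, contradicting the assumption. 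Hence $a$ is squarefree, the hypothesis of Theorem~\ref{1.1} now holds, and reading the prime-by-prime analysis of the \emph{if} direction in reverse forces $r_i\leq 0$, i.e.\ $p_i^2\nmid a^{p_i-1}-1$, for every $p_i\mid n$ with $p_i\nmid a$; the condition for $p\mid \gcd(n, a)$ is automatic since $a^{p-1}\equiv -1\pmod p$ in that case.
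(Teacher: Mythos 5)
Your proof is correct and follows essentially the same route as the paper: both reduce the statement to comparing the formula for $d_K$ from Theorem \ref{1.1} with $(Ind(\theta))^2\,d_K=\pm n^n a^{n-1}$, and both handle the \emph{only if} direction by first forcing squarefreeness of $a$ through an explicit algebraic integer outside $\Z[\theta]$ (the paper exhibits $\theta^{n-1}/c$ with $c^2\mid a$, you exhibit $\theta^{\lceil n/t\rceil}/q$ with $t=v_q(a)$; both constructions work). One cosmetic slip: for a prime $p\mid\gcd(n,a)$ the relevant congruence is $a^{p-1}\equiv 0\pmod p$, hence $a^{p-1}-1\equiv -1\pmod p$, not $a^{p-1}\equiv -1\pmod p$ as written, though your conclusion $r_i=-1$ is unaffected.
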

\section{Preliminary Results.}

If $n = \prod\limits_{i=1}^k p_i ^{s_i}, |a| = \prod\limits_{j=1}^l q_{j}^{t_j}, f(x) = x^n - a, \theta, d_K$ are as in Theorem \ref{1.1}, $A_K$ denotes the ring of algebraic integers of $K$ and $N_{K/\Q}$ stands for the norm map, then by a basic result (\cite[Propositions 2.9, 2.13]{Nar}), we have
$$d_K[A_K : \Z[\theta]]^2  = (-1)^{\binom{n}{2}} N_{K/\mathbb{Q}}(f'(\theta)) = (-1)^{\binom{n}{2}} N_{K/\mathbb{Q}}(n\theta^{n-1}) = (-1)^{\frac{(n-1)(n-2)}{2}} n^{n}a ^{n-1}.$$ So $d_K$ is determined as soon as the exact power of each $p_i, q_j$ which divides $[A_K : \Z[\theta]]$ is known. We first deal with the primes $q_j$ dividing $a$ because these are easier to handle. For primes $p_i$ dividing $n$ and not dividing $a$, $v_{p_i}([A_K : \Z[\theta]])$ is obtained essentially in two stages. The first stage deals with the situation when $n$ is a prime power. Then we establish a relation between $v_{p_{i}}([A_K : \Z[\theta]])$ and $v_{p_{i}}([A_{K_{i}} : \Z[\theta_{i}]])$, where $A_{K_i}$ is the ring of algebraic integers of $K_i = \Q(\theta_i)$ with $\theta_i = \theta^{n_i}$ satisfying the polynomial $x^{p_i^{s_i}} - a$. For calculating the prime powers dividing these indices, we use a particular case of the Theorem of Index of Ore (stated as Theorem 2.B) for which we need the notion of Newton polygon introduced below.

Throughout $\Z_{p}$ denotes the ring of $p$-adic integers and $\F_{p}$ the field with $p$ elements. For $c$ in $\Z_{p}$, $v_p(c)$ stands for the $p$-adic valuation of $c$ defined by $v_p(p) = 1$ and $\bar{c}$ for the image of $c$ under the canonical homomorphism from $\Z_{p}$ onto $\F_{p}$.\\


\noindent  {\bf Definition.} Let $p$ be a prime number and $g(x) = \sum\limits_{j=0}^{n}c_jx^{j}$ be a polynomial over $\Z_p$ with $c_0c_n \neq 0 $. To each non-zero term $c_ix^i$, we associate a point $(n -i, v_p(c_i))$  and form the set \vspace*{-5mm}$$P = \{(j, v_p(c_{n-j})) | 0\leq j\leq n, ~c_{n-j}\neq 0\}.\vspace*{-3mm}$$ The Newton polygon of $g(x)$ with respect to $p$ (also called the $p$-Newton polygon of $g(x)$) is the polygonal path formed by the lower edges along the convex hull of points of $P$. Note that the slopes of the edges are increasing when calculated from left to right.\\

\noindent  {\bf Definition.} Let $g(x) = x^n + a_{n-1}x^{n-1} + \cdots + a_0$ be a polynomial over $\Z_p$ such that the $p$-Newton polygon of $g(x)$ consists of a single edge having positive slope $\lambda$, i.e.,  $\min\{\frac{v_p(a_{n-i})}{i}| 1\leq i\leq n\} = \frac{v_p(a_0)}{n} = \lambda$. Let $e$ denote the smallest positive integer such that $e\lambda \in \Z$. We associate with $g(x)$ a polynomial $T(Y) \in \F_p[Y]$ not divisible by $Y$ of degree $\frac{n}{e} = t$ (say) defined by    $$\vspace*{-0.1in} T(Y) = Y^t + \sum\limits_{j=1}^{t} \overline{\left({\frac{a_{n-ej}}{p^{ej\lambda}}}\right)}Y^{t-j}.\vspace*{-0.1in}$$ To be more precise,  $T(Y)$ will be called the polynomial associated with $g(x)$ with respect to $p$.

\noindent \textbf{Example.} Let $g(x) = (x+5)^4 - 5$. One can easily check that the $2$-Newton polygon of $g(x)$ consists of only one edge with slope $\lambda$ = $1/2$. With notations as in the above definition, we see that $e = 2, n = 4, t =2$ and the polynomial associated with $g(x)$ with respect to $2$ is $T(Y) = Y^2 + Y + \bar{1}$ belonging to $\F_2[Y]$.


We shall use the following weaker versions of the two theorems proved by Ore in a more general set up (cf. \cite{Ore}, \cite[pp. 322-325]{Mo-Na}, \cite[Theorem 1.1]{SKS1}). Their proofs are omitted.

 \noindent\textbf{Theorem 2.A.} 
{\it Let $p$ be a prime number. Let $g(x)$ = $\displaystyle \sum_{i=0}^{n}a_ix^i,~a_0\neq 0$ belonging to $\Z[x]$ be a monic polynomial  such that $g(x)\equiv x^n~(mod~ p)$. Suppose that the $p$-Newton polygon of $g(x)$ consists of $k$ edges $S_1,  \cdots, S_k $ having positive slopes $\lambda_1< \cdots < \lambda_k.$ Let $l_i$ denote the length of the horizontal projection of $S_i$ and $e_i$ be the smallest positive integer such that $e_i\lambda_i  \in \Z$.  Then $g(x)=g_1(x) \cdots g_k(x),$ where $g_i(x)$ is a monic polynomial over  $\Z_p$  of degree $l_i$ whose $p$-Newton polygon has a single edge which is a translate of $S_i$.}

 \bigskip
 
\noindent\textbf{Theorem 2.B.} {\it Let  $p$, $g(x)$, $S_i$, $\lambda_i$, $e_i$, $l_i$ and $g_i(x)$ be as in the above theorem for $1\leq i\leq k$. Let $T_i(Y)$ denote the polynomial associated with $g_i(x)$ with respect to $p$. Assume that $g(x)$ is irreducible over $\Q$. Let $\beta$ be a root of $g(x)$ and $K = \Q(\beta)$. If $T_i(Y)$ is a product of distinct monic irreducible polynomials over $\F_p$ for each $i$, then the highest power of $p$ dividing the index $[A_K : \Z[\beta]]$ equals the number of points with positive integer coordinates lying on or below the $p$-Newton polygon of $g(x)$ away from the vertical line passing through the last vertex of this polygon.}

 \bigskip


 The following basic lemma to be used in the sequel is already known (cf. \cite[Problem 435]{Book}). We omit its proof. As usual for a real number $\lambda$, $\lfloor\lambda \rfloor$ stands for the greatest  integer not exceeding $\lambda$.\\
 
 \noindent\textbf{Lemma 2.C.} {\it Let $t,  n$ be positive integers with $\gcd(t, n) = m$. Let $P$ denote the set of points in the plane with positive integer coordinates lying inside or on the triangle with vertices $(0,0), (n, 0), (n,t)$ which do not lie on the line $x = n$.  Then
 $$\# P = \sum\limits_{i=1}^{n-1} \bigg\lfloor \dfrac{it}{n} \bigg\rfloor  = \dfrac{1}{2} [(n-1)(t-1) + m -1].$$}

With notations as in Theorem \ref{1.1}, using the above lemma and Theorem 2.B we now prove the following result which determines $v_{q_j}([A_K : \Z[\theta]])$. 
\begin{lemma}\label{2.1}
Let $f(x)=x^n-a, K = \Q(\theta)$, $|a| = \prod\limits_{j=1}^{l}q_j^{t_j}$ and $m_j=\gcd(n, t_j)$ be as in Theorem \ref{1.1}. For a fixed prime $q_j$ dividing $a$, suppose that $q_j$ does not divide $m_j$. Then  $v_{q_j}([A_K : \Z[\theta]])$ = $\frac{1}{2}[(n-1)(t_j-1) + m_j -1]$.
\end{lemma}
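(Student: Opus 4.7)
The plan is to apply Theorem 2.B directly to the polynomial $f(x)=x^n-a$ with respect to the prime $p=q_j$. Since $q_j\mid a$, we have $f(x)\equiv x^n\pmod{q_j}$, so Theorem 2.B is applicable in principle. The first step is to compute the $q_j$-Newton polygon of $f(x)$: the only nonzero coefficients are $1$ (for $x^n$) and $-a$ (for the constant term), contributing the two points $(0,0)$ and $(n,t_j)$. Hence the Newton polygon is the single edge joining these points, with positive slope $\lambda=t_j/n$.

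Next I would identify the quantities $e$ and $t$ from the definition preceding Theorem 2.B. Writing $\lambda=(t_j/m_j)/(n/m_j)$ in lowest terms (this is where $m_j=\gcd(n,t_j)$ enters), the smallest positive integer with $e\lambda\in\Z$ is $e=n/m_j$, and so $t=n/e=m_j$. Because all intermediate coefficients of $f(x)$ vanish, the associated polynomial collapses to
\[
T(Y)=Y^{m_j}+\overline{\bigl(-a/q_j^{t_j}\bigr)}\in\F_{q_j}[Y],
\]
where the constant term is a nonzero element of $\F_{q_j}$ since $q_j^{t_j}$ is the exact power of $q_j$ dividing $a$.

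The second step is the verification that $T(Y)$ is a product of distinct irreducible factors over $\F_{q_j}$. Here is precisely where the hypothesis $q_j\nmid m_j$ is used: the derivative $T'(Y)=m_j Y^{m_j-1}$ is nonzero in $\F_{q_j}[Y]$, and any common factor of $T(Y)$ and $T'(Y)$ must divide $Y^{m_j-1}$; but $Y\nmid T(Y)$ because $T(0)\neq 0$, so $\gcd(T,T')=1$ and $T(Y)$ is separable. Thus the hypotheses of Theorem 2.B are satisfied, and $v_{q_j}([A_K:\Z[\theta]])$ equals the number of points with positive integer coordinates lying on or below the $q_j$-Newton polygon of $f(x)$, strictly to the left of the vertical line $x=n$.

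Finally, since the Newton polygon is the segment from $(0,0)$ to $(n,t_j)$, this count is exactly the number of lattice points with positive integer coordinates inside or on the triangle with vertices $(0,0),(n,0),(n,t_j)$ and not lying on $x=n$. Applying Lemma 2.C with this triangle (using $\gcd(n,t_j)=m_j$) immediately yields $\frac{1}{2}[(n-1)(t_j-1)+m_j-1]$, as desired. The only real obstacle is ensuring the separability of $T(Y)$; everything else is essentially the Newton polygon computation and a direct appeal to the preliminary results.
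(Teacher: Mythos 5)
Your proposal is correct and follows essentially the same route as the paper: compute the single-edge $q_j$-Newton polygon of $x^n-a$, identify the associated polynomial $Y^{m_j}-\overline{(a/q_j^{t_j})}$, use $q_j\nmid m_j$ to get separability, and then invoke Theorem 2.B together with the lattice-point count of Lemma 2.C. The only difference is that you spell out the computation of $e$, $t$ and the separability check, which the paper leaves as "easily seen."
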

\begin{proof}
 Clearly the $q_j$-Newton polygon of $f(x)$ consists of single edge having slope 
$\frac{t_j}{n}$. It can be easily seen that the polynomial associated with $f(x)$ with respect to $q_j$ is  $T(Y) = Y^{m_j} -  \overline{( \frac{a}{q_j^{t_j}}) }$ belonging to $\F_{q_j}[Y]$. By hypothesis $q_j \nmid m_j$. So $T(Y)$ has no repeated roots. The desired equality now follows immediately from Theorem 2.B and Lemma 2.C.  
\end{proof}

\indent The following simple result is well known. For reader's convenience, we prove it here.\\
\textbf{Lemma 2.D.}
 {\it For any positive integer $\rs\leq p^s$ with  $p$ a prime and $s>0$ an integer, one has $v_p (\binom{p^s}{\rs}) = s-v_p(\rs)$.}
\begin{proof}
Using the fact that for any natural number $m$, $v_p(m!) = \sum\limits_{j=1}^{\infty}\lfloor \frac{m}{p^j} \rfloor$, we see that 
     \begin{equation}\label{eq:2.D1}
     v_p \left(\binom{p^s}{\rs}\right) = v_p (p^s!) -v_p (\rs!) -v_p((p^s -\rs)!) =\sum\limits_{j=1}^{s} p^{s-j} - \sum\limits_{j=1}^{s} \bigg\lfloor \frac{\rs}{p^j} \bigg\rfloor -\sum\limits_{j=1}^{s} \bigg\lfloor p^{s-j} - \frac{\rs}{p^j} \bigg\rfloor.
\end{equation}         Keeping in mind that $\big\lfloor p^{s-j} - \frac{\rs}{p^j} \big\rfloor = p^{s-j} - \frac{\rs}{p^j}$ or $p^{s-j} - \big\lfloor\frac{\rs}{p^j}\big\rfloor -1$ according as $j \leq v_{p}(\rs)$ or not, the desired equality follows immediately from $(\ref{eq:2.D1})$.
\end{proof}  

%
  
Using the above lemma and Theorems 2.A, 2.B, we prove the following result which plays a significant role in the proof of Theorem \ref{1.1}.
\begin{lemma} \label{2.3}
Let $\K = \Q(\alpha)$  be an algebraic number field with  $\alpha$ a root of an irreducible polynomial $x^{p^s} - a$ belonging to $\Z[x]$ where $p$ is a prime number not dividing $a$ and $s$ is a positive integer. Let $A_{\K}$ be the ring of algebraic integers of $\K$. If $r$ stands for the integer $v_{p}(a^{p-1}-1) - 1$, then the exact power of $p$ dividing the index $[A_{\K} : \Z[\alpha]]$ is $\sum\limits_{i=1}^{\min\{r,s\}} p^{s-i}$ or $0$ according as $r$ is positive or not. 
\end{lemma}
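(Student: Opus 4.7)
The plan is to apply Ore's Theorems 2.A and 2.B to a suitable translate of $f$, whose $p$-Newton polygon can be described explicitly.

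I first replace $\alpha$ by $\beta := \alpha - a$, a root of the monic irreducible $g(x) := f(x + a) = (x + a)^{p^s} - a$ in $\Z[x]$; since $\Z[\alpha] = \Z[\beta]$, the two indices agree. Expanding by the binomial theorem gives
\[
g(x) = x^{p^s} + \sum_{i=1}^{p^s - 1} \binom{p^s}{i} a^{p^s - i}\, x^i + (a^{p^s} - a).
\]
Lemma 2.D yields $v_p\bigl(\binom{p^s}{i}\bigr) = s - v_p(i) \geq 1$ for $1 \leq i \leq p^s - 1$, while Fermat's little theorem gives $a^{p^s} \equiv a \pmod p$; hence $g(x) \equiv x^{p^s} \pmod p$, meeting the hypothesis of Theorem 2.A.

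The key valuation is that of the constant term. Writing $p^s - 1 = (p - 1) N$ with $N = 1 + p + \cdots + p^{s-1}$ coprime to $p$, and $a^{p-1} = 1 + p^{r+1} w$ for some integer $w$ coprime to $p$, the binomial expansion of $(1 + p^{r+1} w)^N - 1$ has its linear term $N p^{r+1} w$ of $p$-valuation exactly $r + 1$, while every higher-order term has $p$-valuation at least $2(r+1)$; thus $v_p(a^{p^s} - a) = r + 1$. Together with the intermediate points $(j, s - v_p(j))$ for $1 \leq j \leq p^s - 1$, a slope analysis shows that the lower convex hull passes through the points
\[
V_k := (p^s - p^{s-k},\, k), \qquad k = 0, 1, \ldots, \min\{r, s\},
\]
and terminates at $(p^s, r+1)$; for $r = 0$ this collapses to the single edge from $(0, 0)$ to $(p^s, 1)$.

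Each edge $V_{k-1} V_k$ has horizontal length $p^{s-k}(p-1)$ and vertical rise $1$, giving $e_k = p^{s-k}(p-1)$ and $t_k = 1$; likewise the terminal edge has $t = 1$ except in the $p = 2$, $r \leq s$ case, where it merges with its predecessor into a single edge of $t = 2$ whose middle coefficient (of $p$-valuation exactly $\min\{r,s\}$, arising from $\binom{p^s}{p^{s-r}} a^{p^s - p^{s-r}}$) is visibly a unit, so the associated residue polynomial has the form $Y^2 + Y + c$ and is squarefree. In every case Theorem 2.B applies, and $v_p([A_{\K} : \Z[\beta]])$ equals the number of lattice points with positive integer coordinates on or below the polygon and off the line $x = p^s$. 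A segment-by-segment count (via Lemma 2.C on each triangular region under an edge, or an elementary floor-function computation) contributes $p^{s-k}(p-1)(k-1) + 1$ from each edge $V_{k-1} V_k$ and $(p^{s-r} - 1) r$ from the terminal edge when $r < s$ (respectively $0$ when $r \geq s$). Summing and telescoping yields $\sum_{i=1}^{\min\{r, s\}} p^{s-i}$ for $r > 0$ and $0$ for $r = 0$, as desired.

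The main obstacle is the slope analysis: one must verify that every intermediate point $(j, s - v_p(j))$ lies on or above the conjectured polygon. For $p^s - p^{s-k+1} < j \leq p^s - p^{s-k}$ this reduces to showing $v_p(j) \leq s - k$; indeed, $v_p(j) \geq s - k + 1$ would force $j = p^{s-k+1} m$ with $p^{k-1} \leq m \leq p^{k-1} - 1/p$, which is impossible for integer $m$, so the only multiple of $p^{s-k+1}$ in this range is the left endpoint $V_{k-1}$ itself. An analogous count handles the points beneath the terminal edge, completing the verification.
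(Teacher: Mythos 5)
Your proposal is correct and follows essentially the same route as the paper: translate to $g(x)=(x+a)^{p^s}-a$, determine its $p$-Newton polygon via Lemma 2.D and $v_p(a^{p^s}-a)=r+1$, check the residual polynomials (including the $p=2$ merged edge with $Y^2+Y+\bar 1$), and count lattice points via Theorem 2.B. The only differences are cosmetic — you describe the polygon uniformly and count lattice points edge-by-edge rather than by horizontal rows, and you handle $r=0$ through the polygon instead of the Eisenstein criterion.
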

\begin{proof} Since $p\nmid a$, $p$ divides $a^{p-1}-1$ and hence $r\geq 0$.
Set $\T = \alpha - a$, so that $\T$ is a root of $g(x) = (x+a)^{p^s} - a$ and $\Z[\T] = \Z[\alpha]$. Observe that $v_p(a^{p^s-1}-1) = v_p(a^{p-1}-1)$ which can be quickly verified keeping in mind that $p^s - 1 = (p-1)m$ with $m \equiv 1~(mod~p)$ and $a^{p-1} \equiv 1~(mod~p).$  When $r = 0$, then the lemma is trivially true because $g(x)$ is an Eisenstein polynomial with respect to $p$  in this situation by virtue  of the above observation and $p$ does not divide $[A_L : \Z[\xi]]$ in view of a basic result (cf. \cite[Lemma 2.17]{Nar}). From now on, it may be assumed that $r \geq 1$.
\\ \indent We first prove the lemma when $r > s$. Using Lemma 2.D, it can be easily seen that the successive vertices of the $p$-Newton polygon of $g(x) = x^{p^s} + \binom{p^s}{1}ax^{p^s -1} +\cdots +\binom{p^s}{p^s -1}a^{p^s -1}x + a^{p^s} -a$  are given by the set $
    \{  (0,0), (p^s -p^{s-1},1), (p^s -p^{s-2},2),\cdots, (p^s -1,s), (p^s ,r+1)\} 
     $.          In this case  the $p$-Newton polygon of $g(x)$ has $s+1$ edges with slopes $\lambda_i = 
    \frac{1}{p^{s-i+1}
     -p^{s-i}}$ for $1\leq i \leq s$ and $\lambda_{s+1} = r+1-s$. Applying  Theorem  2.A, we see that $g(x)=\prod\limits_{i=1}^{s+1} g_i (x)$, where $g_i(x) \in  \mathbb{Z}_p[x]$ is a monic polynomial whose $p$-Newton polygon has a single edge  with slope $\lambda_i$ and $\deg(g_i(x)) = p^{s-i+1}-p^{s-i}$ for $1\leq i\leq s$, $\deg(g_{s+1}(x)) = 1$.
         Clearly the polynomial associated with $g_i(x)$ with respect to $p$ is a monic linear polynomial in $\F_{p}[Y]$.  Note that the number of points with positive integral entries which lie on or below the $p$-Newton polygon of 
         $g(x)$ with ordinate $i$ is $p^{s-i}$ for  $1\leq i \leq s$. So it follows from Theorem 2.B that $v_p([A_L:\Z[\alpha]]) = v_p([A_L:\Z[\xi]]) =  \sum_{i=1}^{s} p^{s-i}$.
     
     Now we consider the case when $1\leq r\leq s$, $p$ odd.  Using Lemma 2.D, one can quickly verify  that the successive vertices of the $p$-Newton polygon of $g(x)$  are given by the set $
        \{ (0,0), (p^s -p^{s-1},1),(p^s -p^{s-2},2),\cdots, (p^s -p^{s-r},r), (p^s ,r+1) \} $.  
      Note that in this case the $p$-Newton polygon of $g(x)$ has $r+1$ edges with slopes $\lambda_i 
       = \frac{1}{p^{s-i+1} -p^{s-i}}$ for $1\leq i \leq r$ and $\lambda_{r+1} = \frac{1}{p^{s-r}}$. Applying Theorem 2.A, we see that 
    $g(x)$ can be written as a product $\prod\limits_{i=1}^{r+1} g_i (x)$  of monic polynomials belonging to $\mathbb{Z}_p[x]$, where the $p$-Newton polygon of $g_i(x)$ has a single edge with slope $\lambda_i$ and the polynomial associated with $g_i(x)$ with respect to $p$  is a  monic linear
    polynomial. Arguing as in the previous case, we see that $v_p([A_L:\Z[\xi]]) =  \sum_{i=1}^{r} p^{s-i}$ which 
     proves the lemma in this case. \\ 
  \indent Now we deal with the situation when $1\leq r\leq s$ and $p=2$.  One can check that the successive vertices of the $2$-Newton polygon of $g(x)=  x^{2^s} + \binom{2^s}{1}ax^{2^s -1} +\cdots +\binom{2^s}{2^s -1}a^{2^s -1}x + a^{2^s} -a$ are given by the set $ 
     \{ (0,0), (2^s -2^{s-1},1),(2^s -2^{s-2},2)\cdots, (2^s -2^{s-r+1},r-1),$ $(2^s ,r+1) \}$.
       The  $2$-Newton polygon of $g(x)$ has $r$ edges with slopes 
          $\lambda_i = \frac{1}{2^{s-i+1} -2^{s-i}}$ for $1\leq i \leq r-1$ and $\lambda_{r} = \frac{1}{2^{s-r}}$. It follows quickly from Theorem 2.A that  $g(x)=\prod\limits_{i=1}^{r} g_i (x)$   where $g_i(x)$ belonging to $\Z_{2}[x]$ is a monic polynomial which corresponds to 
       the   $i$-th edge of the  $2$-Newton polygon of $g(x)$.  
        Further the polynomial associated with $g_i(x)$ with respect to $2$ is a monic linear polynomial for $1\leq i\leq r-1$ and the polynomial associated with $g_r(x)$ with respect to $2$ is a second degree polynomial, say $T_r(Y)$ belonging to $\F_2[Y]$. Keeping in mind that the $2$-Newton polygon of $g_r(x)$ (being a translate of the last edge of the $2$-Newton polygon of $g(x)$) has lattice points $(0,0),(2^{s -r},1), (2^{s - r +1}, 2)$ on it, we conclude that $T_r(Y)=Y^2+Y+\bar{1}$.
So Theorem 2.B is applicable to $g(x)$. Since the number of points with positive integral entries which lie on or below the $2$-Newton polygon of $g(x)$ with ordinate $i$ is $2^{s-i}$, it follows that 
          $v_2([A_L:\Z[\xi]]) = \sum_{i=1}^{r} 2^{s-i}$. This completes the proof of the lemma. 
\end{proof}
\noindent\textbf{Notation 2.E.} For an algebraic number field  $K$, $A_K$ will denote its ring of algebraic integers, $d_K$ its (absolute) discriminant and for a non-zero ideal $I$ of $A_K$, $N_{K/\Q}(I)=[A_K:I]$ will denote the (absolute) norm of $I$. If $K=\Q(\alpha)$ with $\alpha$ an algebraic integer,   the index $[A_K:\Z[\alpha]]$ will be denoted by $Ind(\alpha)$. For a relative extension $L/K$ of algebraic number fields, $d_{L/K}$ will stand for the relative discriminant. We shall use the following formula (cf. \cite[Theorem 4.15]{Nar}) 
\begin{equation}\label{relative}
d_L =\pm d_K ^{[L:K]} N_{K/\Q}(d_{L/K}).
\end{equation}
 If $\{\alpha_1, \cdots, \alpha_n\}$ is a vector space basis of $L/K$, then $D_{L/K}(\alpha_1,\cdots,\alpha_n)$ will denote  the determinant of the  $n\times n$ matrix with $(i,j)$-th entry $Tr_{L/K}(\alpha_i \alpha_j)$, $Tr$ stands for the trace map. If $L=K(\alpha)$, $\alpha\in A_L$ with $g(x)$ as the minimal polynomial of $\alpha$ over $K$, then as in \cite[Proposition 2.9]{Nar}, it can be easily seen that \begin{equation}\label{DLoverK}
 D_{L/K}(1,\alpha,\cdots,\alpha^{n-1}) = (-1)^{\frac{n(n-1)}{2}} N_{L/K}(g'(\alpha)).
 \end{equation} 

 With the above notation, we prove the following lemma which extends Lemma \ref{2.3} to general $n$. 
\begin{lemma}\label{2.5}
Let $K=\Q(\theta)$ where $\theta$ is a root of an irreducible polynomial $x^n-a$ belonging to $\Z[x]$. Let $\prod\limits_{i=1}^{k} p_i ^{s_i}$ be the prime factorization of $n$.  Suppose that $p_i \nmid a $  for some $i$. If $r_i,n_i$ stand for the integers  $v_{p_i} (a^{p_i-1}-1)-1, \frac{n}{p_i ^{s_i}}$ respectively, then the exact power of $p_i$ dividing $Ind(\theta)$ is $ \sum\limits_{j=1}^{\min \left\lbrace r_i,s_i\right\rbrace  }n_i  p_i^{s_i 
    -j}$ or $0$ according as $r_i $ is positive or not.
\end{lemma}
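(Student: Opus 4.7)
The plan is to reduce the general case to the prime-power case already settled by Lemma \ref{2.3}, by passing to the intermediate field $K_i = \Q(\theta_i)$ where $\theta_i = \theta^{n_i}$. Since $\theta_i^{p_i^{s_i}} = \theta^{n} = a$ and $[K_i:\Q] = p_i^{s_i}$, the element $\theta_i$ is a root of the irreducible polynomial $x^{p_i^{s_i}} - a \in \Z[x]$, so Lemma \ref{2.3} gives the desired formula for $v_{p_i}(Ind(\theta_i))$. The goal is then to establish the relation
$$v_{p_i}(Ind(\theta)) \;=\; n_i\, v_{p_i}(Ind(\theta_i)),$$
from which the stated expression follows at once.

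To derive this, I combine the discriminant--index identity for both $\theta$ and $\theta_i$ with the relative discriminant formula (\ref{relative}) applied to the tower $\Q \subset K_i \subset K$. Using $p_i \nmid a$, the discriminant--index relation yields $v_{p_i}(d_K) = ns_i - 2\, v_{p_i}(Ind(\theta))$ and $v_{p_i}(d_{K_i}) = s_i p_i^{s_i} - 2\, v_{p_i}(Ind(\theta_i))$, while (\ref{relative}) gives $v_{p_i}(d_K) = n_i\, v_{p_i}(d_{K_i}) + v_{p_i}(N_{K_i/\Q}(d_{K/K_i}))$. Since $n = n_i p_i^{s_i}$, these identities telescope to
$$-2\, v_{p_i}(Ind(\theta)) \;=\; -2 n_i\, v_{p_i}(Ind(\theta_i)) + v_{p_i}\!\left(N_{K_i/\Q}(d_{K/K_i})\right),$$
reducing the entire argument to showing $v_{p_i}(N_{K_i/\Q}(d_{K/K_i})) = 0$.

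This vanishing is the main obstacle, and I would handle it by an explicit computation exploiting $\gcd(n_i,p_i)=1$. Since $\theta_i \in A_{K_i}$ and $K = K_i(\theta)$ with $\theta$ a root of the monic polynomial $h(x) = x^{n_i} - \theta_i \in A_{K_i}[x]$, the formula (\ref{DLoverK}) applied to the basis $\{1, \theta, \ldots, \theta^{n_i-1}\}$ gives
$$D_{K/K_i}(1, \theta, \ldots, \theta^{n_i-1}) \;=\; (-1)^{\frac{n_i(n_i-1)}{2}} N_{K/K_i}(n_i \theta^{n_i-1}) \;=\; \pm\, n_i^{n_i}\, \theta_i^{n_i-1},$$
using that $N_{K/K_i}(\theta)$ is $\pm$ the constant term $\theta_i$ of $h$. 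The relative discriminant ideal $d_{K/K_i}$ divides the principal ideal generated by this element, so
$$N_{K_i/\Q}(d_{K/K_i}) \;\mid\; \bigl|N_{K_i/\Q}(n_i^{n_i}\theta_i^{n_i-1})\bigr| \;=\; n_i^{n}\, |a|^{n_i-1},$$
whose $p_i$-valuation is zero since $p_i \nmid n_i$ and $p_i \nmid a$. Invoking Lemma \ref{2.3} then yields the stated formula.
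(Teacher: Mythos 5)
Your proposal is correct and follows essentially the same route as the paper: reduce to the subfield $K_i=\Q(\theta^{n_i})$, show $p_i\nmid N_{K_i/\Q}(d_{K/K_i})$ by bounding the relative discriminant by $N_{K/K_i}(g'(\theta))$ for $g(x)=x^{n_i}-\theta_i$, combine the relative discriminant formula with the discriminant--index identity to get $v_{p_i}(Ind(\theta))=n_i\,v_{p_i}(Ind(\theta_i))$, and invoke Lemma \ref{2.3}. The only cosmetic difference is that you write the cancellation out explicitly via the valuations $ns_i$ and $s_ip_i^{s_i}$, whereas the paper keeps the identities in terms of $D_{K/\Q}$ and $D_{K_i/\Q}$.
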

\begin{proof}
Set $\theta _i =\theta ^{n_i}$ and $K_i=\mathbb{Q}(\theta_i)$. Note that $[K:K_i]=n_i$. By $(\ref{relative})$, we have \begin{equation}\label{eq:2501}
d_K =\pm d_{K_i} ^{n_i} N_{K_i/\Q}(d_{K/K_i}).
\end{equation} Claim is that $p_i$ does not divide $N_{K_i/\mathbb{Q}}(d_{K/K_i})$. 
Note that the minimal polynomial of $\theta $ over $K_i$ is $g(x) = x^{n_i} - \theta_i$. 
By a well known result (cf. \cite[Theorem 4.16]{Nar}), $d_{K/K_i}$ divides the ideal $N_{K/K_i}(g'(\theta))A_{K_i}$. So $N_{K_i/\Q}(d_{K/K_i})$ divides  $N_{K/\Q}(g'(\theta)) = \pm n_i^{n}a^{n_i-1}$, which proves the claim in view of the fact that $p_i\nmid n_ia$.  
 It is immediate from $(\ref{eq:2501})$ and the claim that \begin{equation}\label{eq:2602}
v_{p_i}(d_{K})= n_i v_{p_i}(d_{K_i}).   \end{equation}
 Using $(\ref{DLoverK})$, we see that
 \begin{equation}\label{eq:2603} D_{K/\Q}(1,\theta,\cdots,\theta^{n -1}) = \pm n^{n} a^{n -1}, 
 D_{K_i/\Q}(1,\theta_i,\cdots,\theta_i^{p_i ^{s_i}-1}) =\pm p_i ^{s_i p_i ^{s_i}} a ^{p_i^{s_i} -1}. 
 \end{equation} Recall that $n = n_i p_i^{s_i}$ and $p_i \nmid an_i$. So it is clear from $(\ref{eq:2603})$ that 
 \begin{equation}\label{eq:2604}
v_{p_i}(D_{K/\mathbb{Q}}(1,\theta,\cdots,\theta^{n -1}))= n_i 
v_{p_i}(D_{K_i/\mathbb{Q}}(1,\theta_i,\cdots,\theta_i^{p_i ^{s_i} -1})).   \end{equation}
Also by a basic result (cf. \cite[Proposition 2.13]{Nar}), $D_{K/\Q}(1,\theta,\cdots , \theta^{n-1}) =d_K Ind(\theta)^2$; consequently \begin{equation}\label{eq:2605}
v_{p_i}(D_{K/\mathbb{Q}}(1,\theta,\cdots,\theta^{n -1}))=  v_{p_i}(d_{K}) + 2v_{p_i}(Ind(\theta)). \end{equation}
Substituting from $(\ref{eq:2602})$ and $(\ref{eq:2604})$ in $(\ref{eq:2605})$, we get
\begin{equation}\label{eq:2606}
n_iv_{p_i}(D_{K_i/\mathbb{Q}}(1,\theta_i,\cdots,\theta_i^{p_i ^{s_i} -1}))= n_i v_{p_i}(d_{K_i}) + 2v_{p_i}(Ind(\theta)).   \end{equation}
Keeping in mind that $D_{K_i/\Q}(1,\theta_i,\cdots , \theta_i^{p_i ^{s_i}-1}) =d_{K_i} Ind(\theta_i)^2$, we conclude from  $(\ref{eq:2606})$ that $v_{p_i}(Ind(\theta))= n_i v_{p_i}(Ind(\theta_i))$. So the desired equality now follows from Lemma \ref{2.3}.
\end{proof}
\section{Proof of Theorem \ref{1.1}, Corollary \ref{1.5}.}
 Recall that
\begin{equation}\label{eq:99}
D_{K/\Q}(1, \theta, \cdots, \theta^{n-1}) = (Ind~\theta)^2  d_K =(-1)^{\binom{n}{2}} N_{K/\mathbb{Q}}(n\theta^{n-1}) = (-1)^{\frac{(n-1)(n-2)}{2}} n^{n}a ^{n-1}.
\end{equation}
So any prime dividing $Ind(\theta)$ must divide $an$.  
It follows from Lemmas \ref{2.1} and \ref{2.5} that 

 \begin{equation*} \label{eq:1102}
Ind(\theta)=\prod\limits _{i=1}^{k}p_i ^{ u_i} \prod\limits_{j=1}^{l} q_j ^{\frac{1}{2}[(n-1)(t_j-1) + m_j -1]} ,
\end{equation*} where $u_i$ equals  $n_i \sum\limits_{j=1}^{\min\{r_i,s_i\} }  p_i^{s_i 
-j}$ or $0 $ according as  $r_i>0$ or not. Substituting for $Ind(\theta)$ from the above equation and $n = \prod\limits_{i=1}^k p_i ^{s_i}, |a| = \prod\limits_{j=1}^l q_{j}^{t_j}$ in $(\ref{eq:99})$, we immediately obtain the desired formula for $d_K$.  \hspace*{12.7cm}$\square$\\

\begin{proof}[Proof of Corollary \ref{1.5}.] Assume first that $A_K = \Z[\theta]$. Suppose to the contrary that $a$ is not a squarefree integer, say $a = bc^2$ with $c\geq 2$. The equality $\theta^n = a = bc^2$ shows that $\theta^n$ divides $(bc)^n$ in $A_K$ and hence $\theta$ divides $bc$ which implies that $\frac{\theta^{n-1}}{c} = \frac{bc}{\theta}$ is an algebraic integer. Consequently $c$ will divide the index $[A_K : \Z[\theta]]$. This contradiction proves that $a$ is squarefree.

When $a$ is squarefree, writing the prime factorization of $n$ as $\prod\limits_{i=1}^k p_i ^{s_i}$ and on taking $r_i, v_i$  as in Theorem \ref{1.1}, we see that the discriminant $d_K$ of $K$ is given by $$d_K= (-1)^{\frac{(n-1)(n-2)}{2}} sgn(a^{n-1})(\prod\limits _{i=1}^k p_i ^{v_i}) |a|^{n-1}. 
	$$  As shown in equation (\ref{eq:99}), $$ D_{K/\mathbb{Q}}(1,\theta,\theta^2, \cdots, \theta^{n-1}) = (Ind(\theta))^2 d_K=(-1)^{\frac{(n-1)(n-2)}{2}} n^n a^{n-1}.$$ In view of the above two equations, $ \{1,\theta,\theta^2, \cdots, \theta^{n-1}\} $ is an integral basis of $K$ if and only if $n^n =\prod\limits _{i=1}^k p_i ^{v_i}$, i.e., if and only if $\prod\limits_{i=1}^k p_i^{ns_i} =  \prod\limits _{i=1}^k p_i ^{v_i}$. Keeping in mind the definition of $v_i$, the last equality holds if and only if  for $1\leq i\leq k$, $r_i\leq 0$, i.e., $p_i^2 \nmid (a^{p_i -1}-1).$ This completes the proof of the corollary.
\end{proof}


\noindent\textbf{Acknowledgement.} The second author is thankful to Indian National Science Academy, New Delhi for fellowship.


\end{document}